\newtheorem{thm}{Theorem}[section]
\newtheorem{cor}[thm]{Corollary}
\newtheorem{exam}[thm]{Example}
\numberwithin{equation}{section}
\begin{document}

\title{group inverse for anti-triangular block operator matrices}

\author{Huanyin Chen}
\author{Marjan Sheibani$^*$}
\address{
Department of Mathematics\\ Hangzhou Normal University\\ Hang -zhou, China}
\email{<huanyinchen@aliyun.com>}
\address{Women's University of Semnan (Farzanegan), Semnan, Iran}
\email{<sheibani@fgusem.ac.ir>}

 \thanks{$^*$Corresponding author}

\subjclass[2010]{15A09, 65F20.} \keywords{group inverse; Drazin inverse; spectral idempotent; identical subblock; anti-triangular block matrix.}

\begin{abstract}
We present the existence of the group inverse and its representation for the block operator matrix $\left(
\begin{array}{cc}
E&I\\
F&0
\end{array}
\right)$ under the condition $FEF^{\pi}=0$. The group inverse for the anti-triangular block matrices with two identical subblocks under the same condition is thereby investigated. These extend the results of Zou, Chen and Mosi\'c (Studia Scient. Math. Hungar., 54(2017), 489--508), and Cao, Zhang and Ge (J. Appl. Math. Comput., 46(2014), 169--179).
\end{abstract}

\maketitle

\section{Introduction}

Let $\mathcal{B}(X)$ be a Banach algebra of all bounded linear operators over a Banach space $X$. An operator $T$ in $\mathcal{B}(X)$ has Drazin inverse provided that there exists some $S\in \mathcal{B}(X)$ such that $ST=TS, S=STS, T^n=T^{n+1}S$ for some $n\in {\Bbb N}$. Such $S$ is unique, if it exists, and we denote it by $T^D$.
The such smallest $n$ is called the Drazin index of $T$. If $T$ has Drazin index $1$, $T$ is said to have inverse $S$, and denote its group inverse by $T^{\#}$.
The group invertibility of the block operator matrices over a Banach space is attractive. It has interesting applications of resistance distances to the bipartiteness of graphs. Many authors have studied such problems from many different views, e.g., ~\cite{B,B2,C2,C1,C21}.

Let $E,F$ be bounded linear operators and $I$ be the identity operator over a Banach space $X$. It is attractive to investigate the Drazin (group) invertibility of the operator matrix $M=\left(
  \begin{array}{cc}
    E&I\\
    F&0
  \end{array}
\right)$. It was firstly posed by Campbell that the solutions to singular systems of differential equations is determined by the Drazin (group) invertibility of the preceding special matrix $M$.

Zou et al. studied the group inverse for $M$ under the condition $EF=0$. In Section 2, we present the existence of the group inverse and its representation for the block operator matrix $\left(
\begin{array}{cc}
E&I\\
F&0
\end{array}
\right)$ under the wider condition $FEF^{\pi}=0$.

In~\cite[Theorem 5]{C1}, Cao et al. considered the group inverse for a block matrix with identical subblocks over a right Ore domain. In Section 3, we further investigate the necessary and sufficient conditions for the existence and the representations of the the group inverse of a $2\times 2$ operator block matrix $$M=\left(
\begin{array}{cc}
E&F\\
F&0
\end{array}
\right)$$ with identical subblocks. The explicit formula of the group inverse of $M$ is also given under the same condition $FEF^{\pi}=0$.

Let $X$ be a Banach space. We use $\mathcal{B}(X)$ to denote the Banach algebra of bounded linear operator on $X$. If $T\in \mathcal{B}(X)$ has the Drazin inverse $T^D$, the element $T^{\pi}=I-TT^D$ is called the spectral idempotent of $T$. Let $p\in \mathcal{B}(X)$ be an idempotent operator, and let $T\in \mathcal{B}(X)$. Then we write $$T=pTp+pT(I-p)+(I-p)Tp+(I-p)T(I-p),$$
and induce a Pierce representation given by the operator matrix
$$T=\left(\begin{array}{cc}
pTp&pT(I-p)\\
(I-p)Tp&(I-p)T(I-p)
\end{array}
\right)_p.$$

\section{anti-triangular block matrices}

The aim of this section is to provide necessary and sufficient conditions on $E$ and $F$ so that the block operator matrix $\left(
\begin{array}{cc}
E&I\\
F&0
\end{array}
\right)$ has group inverse. We now derive

\begin{thm} Let $M=\left(
\begin{array}{cc}
E&I\\
F&0
\end{array}
\right)$ and $E,F,EF^{\pi}$ have Drazin inverses. If $FEF^{\pi}=0$, then the following are equivalent:
\end{thm}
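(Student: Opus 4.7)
The plan is to exploit the hypothesis $FEF^{\pi}=0$ through a Pierce decomposition that reduces the group invertibility of $M$ to that of two smaller anti-triangular blocks.

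First I would set $p=I-F^{\pi}$, so that $F$ commutes with $p$ and splits as $F=\left(\begin{array}{cc}F_{1}&0\\0&F_{2}\end{array}\right)_{p}$ with $F_{1}$ invertible on $pX$ and $F_{2}=FF^{\pi}$ quasinilpotent. Writing $E=\left(\begin{array}{cc}E_{1}&E_{2}\\E_{3}&E_{4}\end{array}\right)_{p}$, the assumption $FEF^{\pi}=0$ unpacks as the two equations $F_{1}E_{2}=0$ and $F_{2}E_{4}=0$. Invertibility of $F_{1}$ forces $E_{2}=0$, so $E$ is lower triangular in this decomposition and $EF^{\pi}=\left(\begin{array}{cc}0&0\\0&E_{4}\end{array}\right)_{p}$. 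Consequently the assumed Drazin invertibility of $E$, $F$ and $EF^{\pi}$ yields Drazin invertibility of the pieces $E_{1}$, $E_{4}$ and of the two diagonal blocks of $F$, which will be what the equivalent conditions in the theorem are phrased in terms of.

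After permuting the four block rows and columns of $M$ so that the decomposition becomes block lower triangular, one gets
\[
\widetilde M=\left(\begin{array}{cc}A&0\\C&B\end{array}\right),\quad A=\left(\begin{array}{cc}E_{1}&I\\F_{1}&0\end{array}\right),\ B=\left(\begin{array}{cc}E_{4}&I\\F_{2}&0\end{array}\right),\ C=\left(\begin{array}{cc}E_{3}&0\\0&0\end{array}\right).
\]
The invertibility of $F_{1}$ makes $A^{\#}$ essentially explicit, while the quasinilpotence of $F_{2}$ together with $F_{2}E_{4}=0$ reduces the existence of $B^{\#}$ to a condition purely on $E_{4}$, that is, on $EF^{\pi}$. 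Applying the standard formula for the group inverse of a $2\times 2$ block triangular operator produces a candidate for $\widetilde M^{\#}$, and translating back through $p$ and $I-p$ yields the announced expression for $M^{\#}$. The equivalent statements in the theorem then emerge as the separate conditions under which $A^{\#}$ and $B^{\#}$ exist and are compatible through the off-diagonal $C$.

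The main obstacle will be the lengthy bookkeeping needed to verify the three defining identities $MS=SM$, $SMS=S$, $MSM=M$ for the candidate $S$, and conversely to extract the listed equivalent conditions from the a priori existence of $M^{\#}$. The relations $F_{2}E_{4}=0$ and $FF^{\pi}=F^{\pi}F$, together with the Drazin identities for $E$, $F$ and $EF^{\pi}$, are what collapse the numerous mixed cross-terms. In the Banach algebra setting the only analytical subtlety is that $F_{2}$ is quasinilpotent rather than nilpotent, but any power of $F_{2}$ appearing in the expression for $B^{\#}$ is killed at the first step by $F_{2}E_{4}=0$, so the final representation remains a finite algebraic formula in $E^{D}$, $F^{D}$ and $(EF^{\pi})^{D}$.
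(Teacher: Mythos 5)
Your reduction is sound as far as it goes, and it is a genuinely different (and in one respect cleaner) splitting than the paper's: with $p=FF^{D}$ you get $E_{2}=0$, the block $A=\left(\begin{array}{cc}E_{1}&I\\ F_{1}&0\end{array}\right)$ is actually \emph{invertible} (inverse $\left(\begin{array}{cc}0&F_{1}^{-1}\\ I&-E_{1}F_{1}^{-1}\end{array}\right)$), so $A^{\pi}=0$ and your compatibility condition through $C$ is vacuous; moreover your $\widetilde M$ is triangular from the outset, whereas the paper's splitting (the idempotent $e=\mathrm{diag}(FF^{D},I)$ inside the matrix algebra) only becomes triangular after one \emph{proves} $FF^{\pi}=0$. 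But this relocates, rather than resolves, the entire content of the theorem: the step ``the quasinilpotence of $F_{2}$ together with $F_{2}E_{4}=0$ reduces the existence of $B^{\#}$ to a condition purely on $E_{4}$'' is exactly the theorem in the special case of nilpotent $F$ (note $F_{2}=FF^{\pi}$ is nilpotent, not merely quasinilpotent, since $F$ has an ordinary Drazin inverse), and your outline gives no argument for it. Concretely, you must prove: $B=\left(\begin{array}{cc}E_{4}&I\\ F_{2}&0\end{array}\right)$ with $F_{2}$ nilpotent and $F_{2}E_{4}=0$ has a group inverse \emph{if and only if} $E_{4}$ is invertible (after which $F_{2}=(F_{2}E_{4})E_{4}^{-1}=0$ recovers condition (2)). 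The forward implication is the hard part, and it is precisely where the paper does its only nontrivial work: from $MM^{\#}M=M$ and $MM^{\#}=M^{\#}M$ it extracts $F=F^{2}X_{12}$, whence $FF^{\pi}=F^{\pi}F^{n+1}X_{12}^{n}=0$, and then reads $E^{\pi}F^{\pi}=0$ off the residual condition $d^{\pi}ca^{\pi}=0$. Nothing in your proposal substitutes for this computation.

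A second gap: your $(1)\Rightarrow(2)$ direction tacitly assumes that group invertibility of the triangular $\widetilde M$ descends to the diagonal block $B$. In a Banach space this is false in general — the bilateral shift is invertible and lower triangular with respect to $\ell^{2}(\mathbb{Z})=\ell^{2}(n<0)\oplus\ell^{2}(n\geq 0)$, yet neither diagonal block (a unilateral-shift type operator) is even Drazin invertible. The descent does hold here \emph{because} $A$ is invertible (one can show $N(\widetilde M)\subseteq 0\oplus X_{2}$ and use the range--kernel decomposition to get $X_{2}=R(B)\oplus N(B)$), but that is a lemma you must state and prove, not a bookkeeping detail. The same shift example shows your claim that Drazin invertibility of $E$ ``yields Drazin invertibility of the piece $E_{1}$'' is false in general — harmlessly so, since $A$ is invertible regardless of $E_{1}$, but it is symptomatic: the steps you defer as lengthy verification include the actual mathematical content. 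As written, the proposal would, at best, establish $(2)\Rightarrow(1)$ together with the formula, and contains no working mechanism for $(1)\Rightarrow(2)$.
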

\begin{enumerate}
\item [(1)]{\it $M$ has group inverse.} \vspace{-.5mm}
\item [(2)]{\it $F$ has group inverse and $E^{\pi}F^{\pi}=0$.}\\
\end{enumerate}
In this case, $$M^{\#}=\left(
\begin{array}{cc}
E^DF^{\pi}&F^{\#}+(E^DF^{\pi})^2-E^DF^{\pi}EF^{\#}\\
FF^{\#}&-FF^{\#}EF^{\#}
\end{array}
\right).$$
\begin{proof} Let $e=
\left(
\begin{array}{cc}
FF^D&0\\
0&I
\end{array}
\right)$. Then $M=
\left(
\begin{array}{cc}
a&b\\
c&d
\end{array}
\right)_e,$ where $$\begin{array}{c}
a=\left(
\begin{array}{cc}
FF^DE&FF^D\\
F^2F^D&0
\end{array}
\right),b=\left(
\begin{array}{cc}
0&0\\
FF^{\pi}&0
\end{array}
\right),\\
c=\left(
\begin{array}{cc}
F^{\pi}EFF^D&F^{\pi}\\
0&0
\end{array}
\right),d=\left(
\begin{array}{cc}
EF^{\pi}&0\\
0&0
\end{array}
\right).
\end{array}$$
Moreover, we compute that
$$a^{\#}=\left(
\begin{array}{cc}
0&F^D\\
FF^D&-FF^DEF^D
\end{array}
\right), d^D=\left(
\begin{array}{cc}
E^DF^{\pi}&0\\
0&0
\end{array}
\right).$$ Therefore we have $$a^{\pi}=\left(
\begin{array}{cc}
0&0\\
0&F^{\pi}
\end{array}
\right), d^{\pi}=\left(
\begin{array}{cc}
E^{\pi}F^{\pi}&0\\
0&0
\end{array}
\right).$$
$(2)\Rightarrow (1)$ Since $F$ has group inverse, we have $b=0$, and then
$$M=
\left(
\begin{array}{cc}
a&0\\
c&d
\end{array}
\right)_e.$$

Since $EF^{\pi}$ has group inverse, $d$ has group inverse and
and $d^{\#}=d^D$. In view of~\cite[Lemma 2.1]{C2},
$$M^D=\left(
\begin{array}{cc}
a^{\#}&0\\
z&d^{\#}
\end{array}
\right)_e,$$ where $$\begin{array}{lll}
z&=&(d^{\#})^2ca^{\pi}+d^{\pi}c(a^{\#})^2-d^{\#}ca^{\#}\\
&=&(d^{\#})^2ca^{\pi}-d^{\#}ca^{\#}\\
&=&\left(
\begin{array}{cc}
E^DF^{\pi}&0\\
0&0
\end{array}
\right)^2\left(
\begin{array}{cc}
F^{\pi}EFF^{\#}&F^{\pi}\\
0&0
\end{array}
\right)\left(
\begin{array}{cc}
0&0\\
0&F^{\pi}
\end{array}
\right)\\
&-&\left(
\begin{array}{cc}
E^DF^{\pi}&0\\
0&0
\end{array}
\right)\left(
\begin{array}{cc}
F^{\pi}EFF^{\#}&F^{\pi}\\
0&0
\end{array}
\right)\left(
\begin{array}{cc}
0&F^{\#}\\
FF^{\#}&-FF^{\#}EF^{\#}
\end{array}
\right)\\
&=&\left(
\begin{array}{cc}
0&(E^DF^{\pi})^2-E^DF^{\pi}EF^{\#}\\
0&0
\end{array}
\right).
\end{array}$$ Therefore $$M^D=\left(
\begin{array}{cc}
E^DF^{\pi}&F^{\#}+(E^DF^{\pi})^2-E^DF^{\pi}EF^{\#}\\
FF^{\#}&-FF^{\#}EF^{\#}
\end{array}
\right).$$ We check that $$d^{\pi}ca^{\pi}=\left(
\begin{array}{cc}
E^{\pi}F^{\pi}&0\\
0&0
\end{array}
\right)\left(
\begin{array}{cc}
F^{\pi}EFF^{\#}&F^{\pi}\\
0&0
\end{array}
\right)\left(
\begin{array}{cc}
0&0\\
0&F^{\pi}
\end{array}
\right)=0.$$  Then $M$ has group inverse.
Accordingly, $M^{\#}=M^D$, as required.

$(1)\Rightarrow (2)$ Write $M^{\#}=\left(
\begin{array}{cc}
X_{11}&X_{12}\\
X_{21}&X_{22}
\end{array}
\right)$. Then $MM^{\#}=M^{\#}M$, and so
$$\left(
\begin{array}{cc}
E&I\\
F&0
\end{array}
\right)\left(
\begin{array}{cc}
X_{11}&X_{12}\\
X_{21}&X_{22}
\end{array}
\right)=\left(
\begin{array}{cc}
X_{11}&X_{12}\\
X_{21}&X_{22}
\end{array}
\right)\left(
\begin{array}{cc}
E&I\\
F&0
\end{array}
\right).$$ Then we have
$$\begin{array}{c}
EX_{11}+X_{21}=X_{11}E+X_{12}F,\\
FX_{12}=X_{21}.
\end{array}$$
Since $MM^{\#}M=M$, we have
$$\begin{array}{c}
EX_{11}+X_{21}=I,\\
FX_{11}=0.
\end{array}$$
Therefore $$\begin{array}{lll}
F&=&(FE)X_{11}+FX_{21}\\
&=&(FE)F^dFX_{11}+FX_{21}\\
&=&(FEF^d)(FX_{11})+FX_{21}\\
&=&F^2X_{12}.
\end{array}$$ Hence $F$ has group inverse. Hence, $b=0$, and and so
$$M=\left(
\begin{array}{cc}
a&0\\
c&d
\end{array}
\right)_e$$ and $a$ has group inverse. Therefore $d$ has group inverse and $d^{\pi}ca^{\pi}=0$.

Since $d$ has group inverse, we see that $EF^{\pi}$ has group inverse. As $d^{\pi}ca^{\pi}=0$, we have
$$\left(
\begin{array}{cc}
E^{\pi}F^{\pi}&0\\
0&0
\end{array}
\right)\left(
\begin{array}{cc}
F^{\pi}EFF^D&F^{\pi}\\
0&0
\end{array}
\right)\left(
\begin{array}{cc}
0&0\\
0&F^{\pi}
\end{array}
\right)=\left(
\begin{array}{cc}
0&E^{\pi}F^{\pi}\\
0&0
\end{array}
\right)=0,$$ and therefore $E^{\pi}F^{\pi}=0$. This completes the proof.\end{proof}

\begin{cor} Let $M=\left(
\begin{array}{cc}
E&F\\
I&0
\end{array}
\right)$ and $E,F,EF^{\pi}$ have Drazin inverses. If $FEF^{\pi}=0$, then the following are equivalent:
\end{cor}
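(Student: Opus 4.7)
Based on the layout of Theorem 2.1, I expect the corollary to state the equivalence of (1) $M$ has group inverse and (2) $F$ has group inverse and $E^{\pi}F^{\pi}=0$, together with an explicit formula for $M^{\#}$ in which the roles of the identity block and the block $F$ have been interchanged. My plan is to parallel the theorem's proof, using the Pierce decomposition relative to the ``dual'' idempotent $e=\left(\begin{array}{cc} I & 0 \\ 0 & FF^{D}\end{array}\right)$, which now splits the second coordinate since $F$ sits in the top-right rather than the bottom-left. A direct computation yields the Pierce blocks
$$a=\left(\begin{array}{cc} E & F^{2}F^{D} \\ FF^{D} & 0\end{array}\right),\ b=\left(\begin{array}{cc} 0 & FF^{\pi} \\ 0 & 0\end{array}\right),\ c=\left(\begin{array}{cc} 0 & 0 \\ F^{\pi} & 0\end{array}\right),\ d=0.$$
Note that $d=0$: a simplification relative to the theorem, coming from the fact that $E$ now sits inside the projector $e$.

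For $(2)\Rightarrow(1)$: group invertibility of $F$ forces $FF^{\pi}=0$, so $b=0$ and $M$ becomes lower block-triangular in the Pierce form. Applying~\cite[Lemma 2.1]{C2} with $d^{D}=0$ and $d^{\pi}=I-e$ collapses the general block-triangular formula to $M^{D}=\left(\begin{array}{cc} a^{\#} & 0 \\ c(a^{\#})^{2} & 0\end{array}\right)_{e}$. The group inverse $a^{\#}$ of the anti-triangular block $a=\left(\begin{array}{cc} E & F \\ FF^{\#} & 0\end{array}\right)_{e}$ can then be written down explicitly in terms of $E^{D}$, $F^{\#}$ and the spectral projections, using the hypotheses $FEF^{\pi}=0$ and $E^{\pi}F^{\pi}=0$. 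Assembling the pieces and verifying that $d^{\pi}ca^{\pi}=0$---which in this setup reduces exactly to $E^{\pi}F^{\pi}=0$---will confirm that the computed $M^{D}$ is in fact $M^{\#}$.

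For $(1)\Rightarrow(2)$: writing $M^{\#}=(X_{ij})$ and expanding $MM^{\#}=M^{\#}M$ together with $MM^{\#}M=M$ blockwise produces the key relations $X_{11}F=0$, $X_{12}=X_{21}F$, $X_{11}E+X_{12}=I$, and $FX_{21}F=F$. Combining these with $FE=FE\cdot FF^{D}$ (the form of $FEF^{\pi}=0$) in the manner of the theorem should produce a factorization $F=F^{2}Y$, which together with Drazin invertibility of $F$ upgrades it to group invertibility. Once $F^{\#}$ exists, the Pierce computation goes through cleanly, and the necessary condition $d^{\pi}ca^{\pi}=0$ yields $E^{\pi}F^{\pi}=0$.

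The main obstacle is the reverse direction: because $F$ now occupies the top-right rather than the bottom-left, the chaining of block equations that produced ``$F=F^{2}X_{12}$'' in Theorem 2.1 must be reorganized around the new relations; in particular, one must feed $X_{11}F=0$ into $FX_{21}F=F$ via a left multiplication by a suitable power of $F$ together with $FEF^{\pi}=0$. The forward Pierce computation is routine, but the explicit formula for $M^{\#}$ requires careful bookkeeping to account for the swapped positions of the off-diagonal blocks.
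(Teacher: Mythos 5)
Your guess of the statement of the corollary is right, and your Pierce blocks $a,b,c,d$ for the idempotent $e=\mathrm{diag}(I,FF^{D})$ are computed correctly, but the plan misses the paper's actual (and much shorter) argument and leaves the two hardest steps unproved. The paper simply conjugates: with $P=\left(\begin{array}{cc}0&I\\ I&-E\end{array}\right)$, $P^{-1}=\left(\begin{array}{cc}E&I\\ I&0\end{array}\right)$, one checks $M=P^{-1}NP$ where $N=\left(\begin{array}{cc}E&I\\ F&0\end{array}\right)$ is exactly the matrix of Theorem 2.1 with the \emph{same} hypothesis $FEF^{\pi}=0$; the equivalence is inherited at once and $M^{\#}=P^{-1}N^{\#}P$ yields the formula by a single multiplication. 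Your substitute for this, redoing the Pierce machinery, is circular in the direction $(2)\Rightarrow(1)$: your corner is $a=\left(\begin{array}{cc}E&F\\ FF^{\#}&0\end{array}\right)$, which contains the \emph{full} operator $E$, not the compression $FF^{D}E$ that made Theorem 2.1's corner explicitly group invertible from $FEF^{\pi}=0$ alone. This $a$ is an anti-triangular matrix of precisely the type the corollary is about (take $F=0$: then $a=E$, which is only Drazin invertible), so asserting that $a^{\#}$ ``can be written down explicitly'' assumes the result being proved. Relatedly, since $d=0$ forces $d^{\pi}=I$ and $c=\left(\begin{array}{cc}0&0\\ F^{\pi}&0\end{array}\right)$ contains no $E$ at all, the criterion $d^{\pi}ca^{\pi}=ca^{\pi}=0$ cannot ``reduce exactly to $E^{\pi}F^{\pi}=0$'' by itself: that condition would have to be consumed inside the unestablished existence of $a^{\#}$, which is where your decomposition has pushed the entire difficulty rather than resolving it.

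The direction $(1)\Rightarrow(2)$ also has a concrete gap. Your relations $X_{11}F=0$, $X_{12}=X_{21}F$, $X_{11}E+X_{12}=I$, $FX_{21}F=F$ are all correct, but the chaining genuinely stalls: right-multiplying $X_{11}E+X_{21}F=I$ by $F$ gives $X_{11}EF+X_{21}F^{2}=F$, and the hypothesis $FEF^{\pi}=0$, i.e. $FE=FEFF^{D}$, rewrites $FE$ with a right factor $FF^{D}$ --- it gives no handle on $X_{11}EF$, and the annihilation you possess, $X_{11}F=0$, acts on the wrong side to kill that term. Theorem 2.1 succeeded precisely because there the vanishing relation was $FX_{11}=0$, so that $FEX_{11}=(FEF^{D})(FX_{11})=0$ and $F=F^{2}X_{12}$ dropped out. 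With $F$ in the top-right and the same hypothesis $FEF^{\pi}=0$, no such factorization $F=F^{2}Y$ emerges from your listed relations; this mismatch is exactly why the paper conjugates $M$ back to the bottom-left configuration instead of recomputing (transposition, which you might have reached for, is reserved in the paper for Theorem 2.3 and Corollary 2.4, where the hypothesis is the mirrored one $F^{\pi}EF=0$). As written, your proposal is therefore a proof outline with its two load-bearing claims --- the existence and formula for $a^{\#}$, and the factorization $F=F^{2}Y$ --- unsubstantiated.
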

\begin{enumerate}
\item [(1)]{\it $M$ has group inverse.} \vspace{-.5mm}
\item [(2)]{\it $F$ has group inverse and $E^{\pi}F^{\pi}=0$.}\\
\end{enumerate}
In this case, $$M^{\#}=\left(
  \begin{array}{cc}
    \Gamma&\Delta\\
    \Lambda&\Xi\\
     \end{array}
\right),$$
where $$\begin{array}{lll}
\Gamma&=&F^{\pi}E^DF^{\pi},\\
\Delta&=&I-F^{\pi}E^DF^{\pi}E,\\
\Lambda&=&F^{\#}+(E^DF^{\pi})^2-E^DF^{\pi}EF^{\#},\\
\Xi&=&E^DF^{\pi}-F^{\#}E-(E^DF^{\pi})^2E+E^DF^{\pi}EF^{\#}E,
\end{array}$$
\begin{proof} Let $N=\left(
\begin{array}{cc}
E&I\\
F&0
\end{array}
\right)$. Then $$M=P^{-1}NP, P=\left(
\begin{array}{cc}
0&I\\
I&-E
\end{array}
\right).$$ Therefore $M$ has group inverse if and only if so does $N$, if and only if $F, EF^{\pi}$ have group inverse and $E^{\pi}F^{\pi}=0$, by Theorem 2.1.
In this case, $$\begin{array}{lll}
M^{\#}&=&P^{-1}N^{\#}P\\
 &=&\left(
\begin{array}{cc}
E&I\\
I&0
\end{array}
\right)N^{\#}\left(
\begin{array}{cc}
0&I\\
I&-E
\end{array}
\right)\\
&=&\left(
\begin{array}{cc}
I&F^{\pi}E^DF^{\pi}\\
E^DF^{\pi}&F^{\#}+(E^DF^{\pi})^2-E^DF^{\pi}EF^{\#}
\end{array}
\right)\left(
\begin{array}{cc}
0&I\\
I&-E
\end{array}
\right)\\
&=&\left(
  \begin{array}{cc}
    \Gamma&\Delta\\
    \Lambda&\Xi\\
     \end{array}
\right),
\end{array}$$
where $$\begin{array}{lll}
\Gamma&=&F^{\pi}E^DF^{\pi},\\
\Delta&=&I-F^{\pi}E^DF^{\pi}E,\\
\Lambda&=&F^{\#}+(E^DF^{\pi})^2-E^DF^{\pi}EF^{\#},\\
\Xi&=&E^DF^{\pi}-F^{\#}E-(E^DF^{\pi})^2E+E^DF^{\pi}EF^{\#}E,
\end{array}$$ as asserted.\end{proof}

We are now ready to prove the following.

\begin{thm} Let $M=\left(
\begin{array}{cc}
E&F\\
I&0
\end{array}
\right)$ and $E,F,EF^{\pi}$ have Drazin inverse. If $F^{\pi}EF=0$, then the following are equivalent:
\end{thm}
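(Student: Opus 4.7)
The plan is to recognize that Theorem 2.3 is the opposite-algebra dual of Theorem 2.1. The hypothesis $F^{\pi}EF=0$ in $\mathcal{B}(X)$ reads as $FEF^{\pi}=0$ in the opposite algebra $\mathcal{B}(X)^{\mathrm{op}}$, because multiplication there is reversed. Moreover, the transposition map
\[
\phi:M_{2}(\mathcal{B}(X))^{\mathrm{op}}\to M_{2}(\mathcal{B}(X)^{\mathrm{op}}),\qquad \phi\!\left(\begin{array}{cc} a&b\\ c&d \end{array}\right)=\left(\begin{array}{cc} a&c\\ b&d \end{array}\right),
\]
is a unital algebra isomorphism, and it carries $M$ to $N=\left(\begin{array}{cc} E&I\\ F&0 \end{array}\right)$, the matrix of Theorem 2.1. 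Since the Drazin inverse is unchanged under passage to the opposite algebra (each defining equation involves only the two elements $S$ and $T$ and survives reversal of multiplication up to reordering), Theorem 2.1 applied to $N$ inside $\mathcal{B}(X)^{\mathrm{op}}$ will immediately yield the desired equivalence for $M$ in $\mathcal{B}(X)$.

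Concretely, I would: (i) verify that $EF^{\pi}$ having a Drazin inverse in $\mathcal{B}(X)$ is equivalent to $F^{\pi}E$ having one in $\mathcal{B}(X)^{\mathrm{op}}$, via the classical $AB$/$BA$ identity $(BA)^{D}=B((AB)^{D})^{2}A$; (ii) invoke Theorem 2.1 inside $\mathcal{B}(X)^{\mathrm{op}}$ to obtain that $N$ has group inverse there iff $F$ has group inverse and $E^{\pi}F^{\pi}=0$ in $\mathcal{B}(X)^{\mathrm{op}}$, and re-read this back in $\mathcal{B}(X)$ as: $F$ has group inverse and $F^{\pi}E^{\pi}=0$; (iii) pull back through $\phi^{-1}$ by transposing the block matrix for $N^{\#}$ given in Theorem 2.1 and reversing every product inside each entry, producing the explicit formula for $M^{\#}$.

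The main obstacle is bookkeeping: each product inside the formula from Theorem 2.1 must be reversed consistently when translating from $\mathcal{B}(X)^{\mathrm{op}}$ back to $\mathcal{B}(X)$, and one must confirm that the spectral idempotents $E^{\pi},F^{\pi}$ denote the same elements in both algebras (they do, because the Drazin inverse itself is unchanged). A self-contained alternative, should the opposite-algebra shortcut be judged foreign to the paper's style, is to mimic Theorem 2.1 directly: Pierce-decompose $M$ with respect to $e=\left(\begin{array}{cc} FF^{D} & 0\\ 0 & I \end{array}\right)$; the hypothesis $F^{\pi}EF=0$ forces $F^{\pi}EFF^{D}=0$, so the $(I-e)Me$ block reduces to $\left(\begin{array}{cc} 0 & F^{\pi}F\\ 0 & 0 \end{array}\right)$, and assuming $F$ has group inverse makes the Pierce form upper block triangular. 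The upper-triangular version of Lemma 2.1 of \cite{C2} then computes $M^{D}$, and the cross-term obstruction $a^{\pi}bd^{\pi}=0$ reduces to $F^{\pi}E^{\pi}=0$, exactly paralleling Theorem 2.1.
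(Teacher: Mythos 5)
Your proposal is correct, and it is in essence the same reduction the paper performs: the paper proves Theorem 2.3 by passing to the transpose $M^{T}=\left(\begin{smallmatrix} E^{T}&I\\ F^{T}&0\end{smallmatrix}\right)$, applying Theorem 2.1, and transposing the resulting formula back, which is precisely your opposite-algebra argument written with adjoints in place of $\mathcal{B}(X)^{\mathrm{op}}$. The difference in execution is still worth recording, and it mostly favors your version. For operators on a possibly non-reflexive Banach space, ``transpose'' means the adjoint acting on $X^{*}$; the implication ``$M$ group invertible $\Rightarrow M^{T}$ group invertible'' is formal, but the converse, which the paper uses without comment, is not, since $T\mapsto T^{T}$ is not surjective onto $\mathcal{B}(X^{*})$ and one must argue via, e.g., equality of spectra and the pole characterization of group invertibility. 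Your isomorphism $\phi: M_{2}(\mathcal{B}(X))^{\mathrm{op}}\to M_{2}(\mathcal{B}(X)^{\mathrm{op}})$ makes the transfer automatic in both directions, and your bookkeeping checks out: $F^{\pi}EF=0$ does become ``$FEF^{\pi}=0$'' in the opposite algebra, spectral idempotents are unchanged because the Drazin inverse is, $F^{\pi}E$ is Drazin invertible by Cline's formula applied to $EF^{\pi}$, condition (2) of Theorem 2.1 reads back as ``$F$ has group inverse and $F^{\pi}E^{\pi}=0$,'' and entrywise reversal of the transposed formula reproduces the stated $M^{\#}$ exactly (for instance the $(2,1)$ entry $F^{\#}+(F^{\pi}E^{D})^{2}-F^{\#}EF^{\pi}E^{D}$ is the reversal of $F^{\#}+(E^{D}F^{\pi})^{2}-E^{D}F^{\pi}EF^{\#}$). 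The one point you should make explicit is that Theorem 2.1 is stated for $\mathcal{B}(X)$, whereas you invoke it in the Banach algebra $\mathcal{B}(X)^{\mathrm{op}}$, which need not be of the form $\mathcal{B}(Y)$; this is harmless because the proof of Theorem 2.1 is purely algebraic (a Pierce decomposition with respect to $e=\left(\begin{smallmatrix} FF^{D}&0\\ 0&I\end{smallmatrix}\right)$ together with the block-triangular Lemma 2.1 of~\cite{C2}, both valid in any unital Banach algebra), and your self-contained fallback, redoing that Pierce decomposition on $M$ directly so that $F^{\pi}EF=0$ and $FF^{\pi}=0$ annihilate the lower-left block and the obstruction $a^{\pi}bd^{\pi}=0$ reduces to $F^{\pi}E^{\pi}=0$, closes even that residual gap.
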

\begin{enumerate}
\item [(1)]{\it $M$ has group inverse.} \vspace{-.5mm}
\item [(2)]{\it $F$ has group inverse and $F^{\pi}E^{\pi}=0$.}\\
\end{enumerate}
In this case, $$M^{\#}=\left(
\begin{array}{cc}
F^{\pi}E^D&FF^{\#}\\
F^{\#}+(F^{\pi}E^D)^2-F^{\#}EF^{\pi}E^D&-F^{\#}EFF^{\#}
\end{array}
\right).$$
\begin{proof} We consider the transpose $M^T=\left(
\begin{array}{cc}
E^T&I\\
F^T&0
\end{array}
\right)$ of $M$. Then $M$ has group inverse if and only if so does $M^T$.
Applying Theorem 2.1, $M$ has group inverse if and only if $F^T, E^T(F^T)^{\pi}$ have group inverse and $(E^T)^{\pi}(F^T)^{\pi}=0$, i.e.,
$F, F^{\pi}E$ have group inverse and $F^{\pi}E^{\pi}=0$. In this case, we have
$$\begin{array}{l}
M^{\#}=[(M^T)^{\#}]^T=\\
{\small \left(
\begin{array}{cc}
(E^T)^D(F^T)^{\pi}&(F^T)^{\#}+((E^T)^D(F^T)^{\pi})^2-(E^T)^D(F^T)^{\pi}E^T(F^T)^{\#}\\
F^T(F^T)^{\#}&-F^T(F^T)^{\#}E^T(F^T)^{\#}
\end{array}
\right)^T},
\end{array}$$
as desired.\end{proof}

\begin{cor} Let $M=\left(
\begin{array}{cc}
E&I\\
F&0
\end{array}
\right)$ and $E,F$ have Drazin inverses. If $F^{\pi}EF=0$, then the following are equivalent:
\end{cor}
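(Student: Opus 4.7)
The plan is to deduce this corollary from Theorem 2.3 by a similarity transformation, in the same spirit as the derivation of Corollary 2.2 from Theorem 2.1. Set $N=\left(\begin{array}{cc}E&F\\ I&0\end{array}\right)$ and $P=\left(\begin{array}{cc}0&I\\ I&-E\end{array}\right)$. A direct block computation gives $P^{-1}=\left(\begin{array}{cc}E&I\\ I&0\end{array}\right)$ and $M=PNP^{-1}$, so $M$ and $N$ are similar. In particular, $M$ has a group inverse if and only if $N$ does, and in that case $M^{\#}=PN^{\#}P^{-1}$. This immediately reduces all of the work to what Theorem 2.3 already provides for $N$.

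Next I would apply Theorem 2.3 to $N$. The hypothesis $F^{\pi}EF=0$ is the same assumption and the Drazin invertibility of $E$ and $F$ is supplied by the corollary. One small thing that needs checking is that the extra Drazin invertibility hypothesis of Theorem 2.3 on the product block is either implicit in the remaining assumptions or is obtained from condition (2) (where $F$ becomes group invertible, which makes $F^{\pi}$ an idempotent and turns $F^{\pi}E$ into a well-behaved object governed by the same spectral data as $E$ itself). Theorem 2.3 then yields the equivalence between (1) and (2) for $N$, which by similarity also holds for $M$, along with the explicit expression for $N^{\#}$.

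The final step is the computation $M^{\#}=PN^{\#}P^{-1}$: expand the triple product block by block and simplify using the standard identities $FF^{\#}F=F$, $F^{\pi}F^{\#}=0$, $E^{\pi}E^{D}=0$, together with $F^{\pi}E^{\pi}=0$ supplied by (2). I expect the main obstacle to be purely notational: terms of the form $EF^{\#}EF^{\pi}E^{D}$ and $E(F^{\pi}E^{D})^{2}$ will appear and must be regrouped so that the resulting four entries match the intended formula for $M^{\#}$. Since the analytic content has already been absorbed into Theorem 2.3, nothing substantive remains beyond this mechanical bookkeeping.
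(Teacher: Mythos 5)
Your proposal is correct and takes essentially the same route as the paper: the paper proves this corollary with exactly your $N$ and your conjugation (it writes $M=P^{-1}NP$ with $P=\bigl(\begin{smallmatrix}E&I\\ I&0\end{smallmatrix}\bigr)$, which is your similarity with the labels of $P$ and $P^{-1}$ interchanged), then invokes Theorem 2.3 for $N^{\#}$ and expands $\bigl(\begin{smallmatrix}0&I\\ I&-E\end{smallmatrix}\bigr)N^{\#}\bigl(\begin{smallmatrix}E&I\\ I&0\end{smallmatrix}\bigr)$ entrywise, just as you describe. The one point you flag but do not fully resolve --- that Theorem 2.3 additionally assumes $EF^{\pi}$ (equivalently, via Cline's formula, $F^{\pi}E$) has a Drazin inverse, while the corollary only assumes $E$ and $F$ do --- is likewise passed over in silence by the paper's own proof, so your attempt is faithful to, and no weaker than, the published argument.
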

\begin{enumerate}
\item [(1)]{\it $M$ has group inverse.} \vspace{-.5mm}
\item [(2)]{\it $F$ has group inverse and $F^{\pi}E^{\pi}=0$.}\\
\end{enumerate}
In this case, $$M^{\#}=\left(
  \begin{array}{cc}
    \Gamma&\Delta\\
    \Lambda&\Xi\\
     \end{array}
\right),$$
where $$\begin{array}{lll}
\Gamma&=&F^{\pi}E^DF^{\pi},\\
\Delta&=&F^{\#}+(F^{\pi}E^D)^2-F^{\#}EF^{\pi}E^D,\\
\Lambda&=&I-EF^{\pi}E^DF^{\pi},\\
\Xi&=&F^{\pi}E^D-EF^{\#}-E(F^{\pi}E^D)^2+EF^{\#}EF^{\pi}E^D,
\end{array}$$
\begin{proof} Let $N=\left(
\begin{array}{cc}
E&F\\
I&0
\end{array}
\right)$. Then $$M=P^{-1}NP, P=\left(
\begin{array}{cc}
E&I\\
I&0
\end{array}
\right).$$ In view of Theorem 2.3, $$N^{\#}=\left(
\begin{array}{cc}
F^{\pi}E^D&FF^{\#}\\
F^{\#}+(F^{\pi}E^D)^2-F^{\#}EF^{\pi}E^D&-F^{\#}EFF^{\#}
\end{array}
\right).$$ Hence, $M$ has group inverse if and only if so does $N$, if and only if $F, F^{\pi}E$ have group inverse and $F^{\pi}E^{\pi}=0$, by Theorem 2.3.
Moreover, we have $$\begin{array}{lll}
M^{\#}&=&P^{-1}N^{\#}P\\
 &=&\left(
\begin{array}{cc}
0&I\\
I&-E
\end{array}
\right)N^{\#}\left(
\begin{array}{cc}
E&I\\
I&0
\end{array}
\right)\\
&=&\left(
  \begin{array}{cc}
    \Gamma&\Delta\\
    \Lambda&\Xi\\
     \end{array}
\right),
\end{array}$$
where $$\begin{array}{lll}
\Gamma&=&F^{\pi}E^DF^{\pi},\\
\Delta&=&F^{\#}+(F^{\pi}E^D)^2-F^{\#}EF^{\pi}E^D,\\
\Lambda&=&I-EF^{\pi}E^DF^{\pi},\\
\Xi&=&F^{\pi}E^D-EF^{\#}-E(F^{\pi}E^D)^2+EF^{\#}EF^{\pi}E^D,
\end{array}$$ as asserted. \end{proof}

We come now to prove:

\begin{cor} Let $M=\left(
\begin{array}{cc}
E&F\\
I&0
\end{array}
\right)$ and $E,F,EF^{\pi}$ have Drazin inverses. If $EF=\lambda FE (\lambda \in {\Bbb C})$ or $EF^2=FEF$, then the following are equivalent:
\end{cor}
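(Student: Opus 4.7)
The plan is to reduce directly to Theorem 2.3 by showing that each of the two hypotheses $EF=\lambda FE$ and $EF^2=FEF$ implies the weaker relation $F^{\pi}EF=0$ required there. Once this reduction is in place, both the equivalence (1)$\Leftrightarrow$(2) and the closed form for $M^{\#}$ are inherited from Theorem 2.3 without further work, since $M$ is already presented in the shape $\left(\begin{smallmatrix}E&F\\I&0\end{smallmatrix}\right)$ to which that theorem directly applies.

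For the first hypothesis, I would observe that $EF=\lambda FE=F\cdot(\lambda E)$ already factors through $F$ on the left. Combined with the absorption identity $FF^{D}F=F$, this yields $FF^{D}\cdot EF=(FF^{D}F)\cdot\lambda E=F\cdot\lambda E=EF$, so that $F^{\pi}EF=(I-FF^{D})EF=0$. The degenerate case $\lambda=0$ is automatic since then $EF=0$.

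For the second hypothesis, $EF^{2}=FEF$ says exactly that $EF$ commutes with $F$. Since the Drazin inverse of any element lies in its double commutant (a standard property valid in any associative ring), $EF$ then commutes with $F^{D}$, and hence with $FF^{D}$. Using $F\cdot FF^{D}=F$ I would then compute $FF^{D}\cdot EF=EF\cdot FF^{D}=E(F\cdot FF^{D})=EF$, so again $F^{\pi}EF=0$. Having verified the hypothesis of Theorem 2.3 in both cases, I would invoke that theorem to conclude that $M$ has group inverse iff $F$ has group inverse and $F^{\pi}E^{\pi}=0$, with the explicit formula inherited verbatim. The only step that is not a routine rearrangement is the passage ``commutes with $F$ $\Rightarrow$ commutes with $FF^{D}$''; this is where I expect any subtlety to lie, but it is a classical fact about the Drazin inverse so no further machinery is needed.
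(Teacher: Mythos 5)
Your overall strategy coincides exactly with the paper's: the published proof is likewise a one-line reduction, checking $F^{\pi}EF=\lambda F^{\pi}FE=0$ in the first case and $F^{\pi}EF=F^{\pi}EF^2F^{\#}=F^{\pi}FEFF^{\#}=0$ in the second, and then invoking the anti-triangular result for $\left(\begin{smallmatrix}E&F\\ I&0\end{smallmatrix}\right)$ under $F^{\pi}EF=0$ (the paper cites Corollary 2.4, though Theorem 2.3 is the statement that actually matches this block shape and whose formula for $M^{\#}$ is reproduced verbatim; your citation of Theorem 2.3 is the right one). Your double-commutant argument in the second case --- $EF$ commutes with $F$, hence with $F^{D}$ --- is a correct classical fact and a slightly cleaner route than the paper's substitution $F=F^2F^{\#}$.

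The genuine problem is the step you call the ``absorption identity'' $FF^{D}F=F$: this is \emph{not} a Drazin-inverse identity. One has $F-F^2F^D=FF^{\pi}$, the (quasi)nilpotent part of $F$, and this vanishes precisely when $F$ has group inverse --- which in this corollary is part of condition (2), not a standing hypothesis ($F$ is only assumed Drazin invertible). Concretely, take $E=I$ and $F\neq 0$ with $F^2=0$: then $EF=FE$, yet $F^{\pi}EF=F\neq 0$, so neither commutation hypothesis implies $F^{\pi}EF=0$ in general, and your reduction (and the identical computation $FF^{D}\cdot EF=EF$ in both of your cases) collapses. The reduction is therefore legitimate only in the direction (2)$\Rightarrow$(1), where $F^{\#}$ exists; for (1)$\Rightarrow$(2) one must first extract the group invertibility of $F$ from that of $M$ before Theorem 2.3 can be applied, and neither you nor, it must be said, the paper does this --- the published proof makes the same tacit assumption ($F^{\pi}F=0$ in the first case, free use of $F^{\#}$ in the second). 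So your proposal faithfully reproduces the paper's argument, gap included; but you mislocate the subtlety: the double-commutant step you flag as delicate is the safe one, while the absorption step is exactly where the argument silently needs $\mathrm{ind}(F)\le 1$.
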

\begin{enumerate}
\item [(1)]{\it $M$ has group inverse.} \vspace{-.5mm}
\item [(2)]{\it $F$ have group inverse and $F^{\pi}E^{\pi}=0$.}\\
\end{enumerate}
In this case, $$M^{\#}=\left(
\begin{array}{cc}
F^{\pi}E^D&FF^{\#}\\
F^{\#}+(F^{\pi}E^D)^2-F^{\#}EF^{\pi}E^D&-F^{\#}EFF^{\#}
\end{array}
\right).$$
\begin{proof} If $EF=\lambda FE (\lambda \in {\Bbb C})$, then $F^{\pi}EF=\lambda F^{\pi}FE=0$. If $EF^2=FEF$, then
$F^{\pi}EF=F^{\pi}EF^2F^{\#}=F^{\pi}FEFF^{\#}=0$. This completes the proof by Corollary 2.4.\end{proof}

\section{block matrices with identical subblocks}

In ~\cite{C1}, Cao et al. considered the group inverse for block matrices with identical subblocks over a right Ore domain. In this section we are concerned with
the group inverse for block operator matrices with identical subblocks over a Banach space.

\begin{thm} Let $M=\left(
\begin{array}{cc}
E&F\\
F&0
\end{array}
\right)$ and $E,EF^{\pi}$ have Drazin inverse and $F$ has group inverse. If $FEF^{\pi}=0$, then the following are equivalent:
\end{thm}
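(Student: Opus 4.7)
I will adapt the Pierce-decomposition argument of Theorem 2.1 with the idempotent $e=\left(\begin{smallmatrix}FF^{\#}&0\\0&I\end{smallmatrix}\right)$ and the four blocks $a=eMe$, $b=eM(I-e)$, $c=(I-e)Me$, $d=(I-e)M(I-e)$. Since $F$ has group inverse, $FF^{\pi}=0$; since $FEF^{\pi}=0$, the image of $EF^{\pi}$ lies in $\ker F=\ker(FF^{\#})$, so $FF^{\#}EF^{\pi}=0$. These two identities make $b=0$ (so $M$ is block lower-triangular in the Pierce sense) and also give $F^{\pi}EF^{\pi}=EF^{\pi}$, whence $d=\operatorname{diag}(EF^{\pi},0)$, exactly as in the proof of Theorem 2.1; in particular $d$ has group inverse iff $EF^{\pi}$ does.

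The one block whose analysis really differs from Theorem 2.1 is $a=\left(\begin{smallmatrix}FF^{\#}EFF^{\#}&F\\F&0\end{smallmatrix}\right)$, which now has $F$ rather than $I$ in the off-diagonal slots. However $F$ is invertible on $FF^{\#}X$ with inverse $F^{\#}$, so the natural candidate is
\[
a^{\#}=\left(\begin{array}{cc}0&F^{\#}\\F^{\#}&-F^{\#}EF^{\#}\end{array}\right),\qquad a^{\pi}=\left(\begin{array}{cc}0&0\\0&F^{\pi}\end{array}\right),
\]
which I verify in the corner $eRe$ by the three group-inverse axioms, the key cancellations being provided by $FF^{\#}F=F$ and $FF^{\#}F^{\#}=F^{\#}$. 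With $b=0$ and $a^{\#}$, $d^{\#}$ both available, I apply \cite[Lemma 2.1]{C2} to obtain $M^{D}=\left(\begin{smallmatrix}a^{\#}&0\\z&d^{\#}\end{smallmatrix}\right)_{e}$ with $z=(d^{\#})^{2}ca^{\pi}-d^{\#}ca^{\#}$, and the upgrade from Drazin to group inverse is controlled by $d^{\pi}ca^{\pi}=0$. Because $d^{\pi}$, $c$, and $a^{\pi}$ now have the same Pierce-matrix shapes as in Theorem 2.1, the product $d^{\pi}ca^{\pi}$ collapses to $E^{\pi}F^{\pi}$, and the condition becomes $E^{\pi}F^{\pi}=0$.

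Thus (2), which I expect to be \emph{$EF^{\pi}$ has group inverse and $E^{\pi}F^{\pi}=0$}, implies (1) with an explicit formula for $M^{\#}$ matching that of Theorem 2.1 up to the $F$-factors propagating from the off-diagonal of $a$. For $(1)\Rightarrow(2)$ I would follow the template of Theorem 2.1's converse: write $M^{\#}=(X_{ij})$, extract algebraic relations from $MM^{\#}=M^{\#}M$ and $MM^{\#}M=M$, and read off that $d$ (and hence $EF^{\pi}$) has group inverse and $d^{\pi}ca^{\pi}=0$, equivalent to $E^{\pi}F^{\pi}=0$. The main obstacle is the verification of the new $a^{\#}$ and the careful tracking of $F$-factors in the final $M^{\#}$ formula; once these are carried out, the rest of the argument is a direct transcription of Theorem 2.1.
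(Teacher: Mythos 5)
Your overall strategy---a direct Pierce decomposition of $M$ itself with $e=\operatorname{diag}(FF^{\#},I)$---is viable, and is in fact more direct than the paper's proof (which passes to $N=\left(\begin{smallmatrix}E&I\\ F^{2}&0\end{smallmatrix}\right)$, computes $N^{\#}$ by the Pierce trick, and transfers back to $M$ via Cline's formula); your verification of $a^{\#}$ is also correct. But there is a genuine computational error that derails your conclusion: the block $c=(I-e)Me$ does \emph{not} have the same shape as in Theorem 2.1. Its $(1,2)$ entry is $F^{\pi}\cdot F=0$, not $F^{\pi}$, because the $(1,2)$ entry of $M$ is now $F$ rather than $I$---the very $F$-versus-$I$ effect you correctly tracked in $a$ but failed to apply to $c$. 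Hence $c=\left(\begin{smallmatrix}F^{\pi}EFF^{\#}&0\\ 0&0\end{smallmatrix}\right)$, so $ca^{\pi}=0$ and $d^{\pi}ca^{\pi}=0$ holds \emph{automatically}: it imposes no condition whatsoever. Your expected condition (2), ``$EF^{\pi}$ has group inverse and $E^{\pi}F^{\pi}=0$,'' is therefore wrong; $E^{\pi}F^{\pi}=0$ is not necessary, and indeed it is not what the theorem asserts. Concretely, take $E=0$ and $F$ a nontrivial idempotent: all hypotheses hold, $M=\left(\begin{smallmatrix}0&F\\ F&0\end{smallmatrix}\right)$ has group inverse $\left(\begin{smallmatrix}0&F^{\#}\\ F^{\#}&0\end{smallmatrix}\right)$, yet $E^{\pi}F^{\pi}=F^{\pi}\neq0$. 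The correct equivalent condition is only $EE^{\pi}F^{\pi}=0$, which under the stated hypotheses is exactly the group invertibility of $EF^{\pi}$.

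The error propagates into your representation. In $z=(d^{\#})^{2}ca^{\pi}+d^{\pi}c(a^{\#})^{2}-d^{\#}ca^{\#}$ you dropped the term $d^{\pi}c(a^{\#})^{2}$---legitimate in Theorem 2.1, where condition (2) there kills $d^{\pi}$'s nonzero entry---but here the situation is precisely reversed: $(d^{\#})^{2}ca^{\pi}=0$ since $ca^{\pi}=0$, while $d^{\pi}c(a^{\#})^{2}$ survives (its nonzero entry is $E^{\pi}F^{\pi}E(F^{\#})^{2}$, since $d^{\pi}$ has entry $E^{\pi}F^{\pi}\neq0$ in general). That surviving term is the source of all the $E^{\pi}F^{\pi}E(F^{\#})^{2}$-type summands in the paper's expressions for $\Gamma,\Delta,\Lambda,\Xi$, which your predicted formula (``Theorem 2.1 up to $F$-factors'') omits. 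Similarly, your planned converse would try to extract $E^{\pi}F^{\pi}=0$ from group invertibility of $M$, which the counterexample above shows is impossible; the paper's converse instead derives from $M^{\#}M^{2}=M$ (via the factorization $M^{2}=\left(\begin{smallmatrix}EF^{\pi}E+F^{2}&EF\\ 0&F^{2}\end{smallmatrix}\right)\left(\begin{smallmatrix}I&I\\ F^{\#}E&0\end{smallmatrix}\right)$) only that $EF^{\pi}$ has group inverse, i.e.\ $EE^{\pi}F^{\pi}=0$. With these corrections your direct decomposition would actually give a cleaner proof than the paper's Cline-formula detour, but as written the proposal argues toward a different, and false, statement.
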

\begin{enumerate}
\item [(1)]{\it $M$ has group inverse.} \vspace{-.5mm}
\item [(2)]{\it $EE^{\pi}F^{\pi}=0$.}
\end{enumerate}
In this case, $$M^{\#}=\left(
  \begin{array}{cc}
    \Gamma&\Delta\\
    \Lambda&\Xi\\
     \end{array}
\right),$$
where $$\begin{array}{rll}
\Gamma&=&[I-E^{\pi}F^{\pi}][E^DF^{\pi}+E^{\pi}F^{\pi}E(F^{\#})^2]+E^{\pi}F^{\pi}E(F^{\#})^2,\\
\Delta&=&[I-E^{\pi}F^{\pi}][F^{\#}-E^{\pi}F^{\pi}E(F^{\#})^2EF^{\#}-E^DF^{\pi}EF^{\#}]\\
&-&E^{\pi}F^{\pi}E(F^{\#})^2EF^{\#},\\
\Lambda&=&F[E^DF^{\pi}+E^{\pi}F^{\pi}E(F^{\#})^2]^2+F^{\#}-FE^{\pi}F^{\pi}[E(F^{\#})^2]^2\\
&-&FE^DF^{\pi}E(F^{\#})^2,\\
\Xi&=&[FE^DF^{\pi}+FE^{\pi}F^{\pi}E(F^{\#})^2][F^{\#}-E^{\pi}F^{\pi}E(F^{\#})^2EF^{\#}\\
&-&E^DF^{\pi}EF^{\#}]-[F^{\#}-FE^{\pi}F^{\pi}E(F^{\#})^2E(F^{\#})^2\\
&-&FE^DF^{\pi}E(F^{\#})^2]EF^{\#}.
\end{array}$$
\begin{proof}
$(1)\Rightarrow (2)$ Obviously, we have
$$\begin{array}{c}
M=\left(
\begin{array}{cc}
F^{\pi}E&F\\
F&0
\end{array}
\right)\left(
\begin{array}{cc}
I&I\\
F^{\#}E&0
\end{array}
\right),\\
M^2=\left(
\begin{array}{cc}
EF^{\pi}E+F^2&EF\\
0&F^2
\end{array}
\right)\left(
\begin{array}{cc}
I&I\\
F^{\#}E&0
\end{array}
\right).
\end{array}$$
Write $M^{\#}=\left(
\begin{array}{cc}
X_{11}&X_{12}\\
X_{21}&X_{22}
\end{array}
\right)$. Then $M^{\#}M^2=M$, and so
$$\left(
\begin{array}{cc}
X_{11}&X_{12}\\
X_{21}&X_{22}
\end{array}
\right)\left(
\begin{array}{cc}
EF^{\pi}E+F^2&EF\\
0&F^2
\end{array}
\right)=\left(
\begin{array}{cc}
F^{\pi}E&F\\
F&0
\end{array}
\right).$$ Therefore $$X_{11}EF^{\pi}E+X_{11}F^2=F^{\pi}E,$$ hence,
$$X_{11}EF^{\pi}EF^{\pi}+X_{11}F^2F^{\pi}=F^{\pi}EF^{\pi}.$$ It follows that
$$X_{11}(EF^{\pi})^2=F^{\pi}EF^{\pi}=EF^{\pi}.$$ Hence $EF^{\pi}$ has group inverse. Then $(EF^{\pi})^{\#}=E^DF^{\pi}.$ Therefore
$$\begin{array}{lll}
EF^{\pi}&=&(EF^{\pi})^{\#}EF^{\pi}EF^{\pi}\\
&=&E^DF^{\pi}EF^{\pi}EF^{\pi}\\
&=&EE^DF^{\pi};
\end{array}$$ hence, $EE^{\pi}F^{\pi}=0$.

$(2)\Rightarrow (1)$ Since $EE^{\pi}F^{\pi}=0$, we have $E^DF^{\pi}(EF^{\pi})^2=EF^{\pi}$, and so $EF^{\pi}$ has group inverse.

Let $N=\left(
\begin{array}{cc}
E&I\\
F^2&0
\end{array}
\right)$. Choose $e=\left(
\begin{array}{cc}
FF^{\#}&0\\
0&I
\end{array}
\right).$ Then
$$a=\left(
\begin{array}{cc}
FF^{\#}E&FF^{\#}\\
F^2&0
\end{array}
\right),c=\left(
\begin{array}{cc}
F^{\pi}EFF^{\#}&F^{\pi}\\
0&0
\end{array}
\right),d=\left(
\begin{array}{cc}
EF^{\pi}&0\\
0&0
\end{array}
\right)$$ and $b=0$. Then
$$N=\left(
\begin{array}{cc}
a&0\\
c&d
\end{array}
\right)_e.$$
Moreover, we have $$a^{\#}=\left(
\begin{array}{cc}
0&(F^{\#})^2\\
FF^{\#}&-FF^{\#}E(F^{\#})^2
\end{array}
\right), d^{\#}=\left(
\begin{array}{cc}
E^DF^{\pi}&0\\
0&0
\end{array}
\right).$$ We compute that $$a^{\pi}=\left(
\begin{array}{cc}
0&0\\
0&F^{\pi}
\end{array}
\right), d^{\pi}=\left(
\begin{array}{cc}
E^{\pi}F^{\pi}&0\\
0&I
\end{array}
\right).$$ Obviously, $a^{\pi}cd^{\pi}=0$. So $N$ has group inverse.
Moreover, we have $$N^{\#}=\left(
\begin{array}{cc}
a^{\#}&0\\
z&d^{\#}
\end{array}
\right),$$ where $$z=d^{\pi}c(a^{\#})^2+(d^{\#})^2ca^{\pi}-d^{\#}ca^{\#}.$$
Clearly, $$\begin{array}{rll}
d^{\pi}c(a^{\#})^2&=&\left(
\begin{array}{cc}
E^{\pi}F^{\pi}E(F^{\#})^2&-E^{\pi}F^{\pi}E(F^{\#})^2E(F^{\#})^2\\
0&0
\end{array}
\right),\\
(d^{\#})^2ca^{\pi}&=&\left(
\begin{array}{cc}
0&E^DF^{\pi}E^DF^{\pi}\\
0&0
\end{array}
\right),\\
d^{\#}ca^{\#}&=&\left(
\begin{array}{cc}
0&E^DF^{\pi}E(F^{\#})^2\\
0&0
\end{array}
\right).
\end{array}$$
Hence we compute that $z=(z_{ij})$, where
$$\begin{array}{lll}
z_{11}&=&E^{\pi}F^{\pi}E(F^{\#})^2,\\
z_{12}&=&E^DF^{\pi}E^DF^{\pi}-E^{\pi}F^{\pi}E(F^{\#})^2E(F^{\#})^2-E^DF^{\pi}E(F^{\#})^2,\\
z_{21}&=&0, z_{22}=0.
\end{array}$$
Therefore $$N^{\#}=\left(
\begin{array}{cc}
\alpha&\beta\\
\gamma&\delta
\end{array}
\right),$$
where $$\begin{array}{lll}
\alpha&=&E^DF^{\pi}+E^{\pi}F^{\pi}E(F^{\#})^2,\\
\beta&=&(F^{\#})^2+E^DF^{\pi}E^DF^{\pi}-E^{\pi}F^{\pi}E(F^{\#})^2E(F^{\#})^2-E^DF^{\pi}E(F^{\#})^2,\\
\gamma&=&FF^{\#},\\
\delta&=&-FF^{\#}E(F^{\#})^2.
\end{array}$$
Hence, we have
$$\begin{array}{ll}
&NN^{\#}=N^{\#}N\\
=&\left(
\begin{array}{cc}
E&I\\
F^2&0
\end{array}
\right)\left(
\begin{array}{cc}
\alpha&\beta\\
\gamma&\delta
\end{array}
\right)\\
=&\left(
\begin{array}{cc}
\alpha&\beta\\
\gamma&\delta
\end{array}
\right)\left(
\begin{array}{cc}
E&I\\
F^2&0
\end{array}
\right)\\
=&\left(
\begin{array}{cc}
EE^DF^{\pi}+FF^{\#}&\alpha\\
F^2\alpha&FF^{\#}
\end{array}
\right).
\end{array}$$ Thus we have
$$N^{\pi}=\left(
\begin{array}{cc}
E^{\pi}F^{\pi}&-\alpha\\
-F^2\alpha&F^{\pi}
\end{array}
\right).$$
Clearly, we check that
$$\begin{array}{c}
M=\left(
\begin{array}{cc}
E&I\\
F&0
\end{array}
\right)\left(
\begin{array}{cc}
I&0\\
0&F
\end{array}
\right),\\
N=\left(
\begin{array}{cc}
I&0\\
0&F
\end{array}
\right)\left(
\begin{array}{cc}
E&I\\
F&0
\end{array}
\right).
\end{array}$$
By virtue of Cline's formula, $M$ has Drazin inverse.

We see that
$$\begin{array}{ll}
&\left(
\begin{array}{cc}
E&I\\
F&0
\end{array}
\right)N^{\pi}\left(
\begin{array}{cc}
I&0\\
0&F
\end{array}
\right)\\
=&\left(
\begin{array}{cc}
E&I\\
F&0
\end{array}
\right)\left(
\begin{array}{cc}
E^{\pi}F^{\pi}&-\alpha\\
-F^2\alpha&F^{\pi}
\end{array}
\right)\left(
\begin{array}{cc}
I&0\\
0&F
\end{array}
\right)\\
=&\left(
\begin{array}{cc}
-F^2\alpha&-E\alpha F\\
FE^{\pi}F^{\pi}&-F\alpha F
\end{array}
\right).
\end{array}$$ We compute that
$$\begin{array}{rll}
F^2\alpha&=&F^2E^DF^{\pi}+F^2E^{\pi}F^{\pi}E(F^{\#})^2\\
&=&F^2EF^{\pi}(E^DF^{\pi})^2+F^2(EF^{\pi})(E^DF^{\pi})E(F^{\#})^2=0,\\
E\alpha F&=&EE^{\pi}F^{\pi}EF^{\#}=0,\\
FE^{\pi}F^{\pi}&=&FE^DEF^{\pi}=F(E^DF^{\pi})(EF^{\pi})=F(EF^{\pi})(E^DF^{\pi})=0,\\
F\alpha F&=&FE^{\pi}F^{\pi}EF^{\#}=F(E^DF^{\pi})(EF^{\pi})EF^{\#}\\
&=&F(EF^{\pi})(E^DF^{\pi})EF^{\#}=0.
\end{array}$$ Hence $M=MM^DM$, i.e., $M$ has group inverse. Thus we have $M^{\#}=M^D$.

Moreover, we have $$\begin{array}{lll}
M^{D}&=&\left(
\begin{array}{cc}
E&I\\
F&0
\end{array}
\right)(N^{\#})^2\left(
\begin{array}{cc}
I&0\\
0&F
\end{array}
\right)\\
&=&\left(
\begin{array}{cc}
E&I\\
F&0
\end{array}
\right)\left(
\begin{array}{cc}
\alpha&\beta\\
\gamma&\delta
\end{array}
\right)^2\left(
\begin{array}{cc}
I&0\\
0&F
\end{array}
\right)\\
&=&\left(
  \begin{array}{cc}
    \Gamma&\Delta\\
    \Lambda&\Xi\\
     \end{array}
\right),
\end{array}$$
where $$\begin{array}{rll}
\Gamma&=&(E\alpha+\gamma)\alpha+(E\beta+\delta)\gamma,\\
\Delta&=&(E\alpha+\gamma)\beta F+(E\beta+\delta)\delta F,\\
\Lambda&=&F(\alpha^2+\beta\gamma),\\
\Xi&=&F(\alpha\beta +\beta\delta)F.
\end{array}$$ Therefore we complete the proof by the direct computation.\end{proof}

\begin{cor} Let $M=\left(
\begin{array}{cc}
E&F\\
F&0
\end{array}
\right)$ and $E,EF^{\pi}$ have Drazin inverse and $F$ has group inverse. If $F^{\pi}EF=0$, then the following are equivalent:\end{cor}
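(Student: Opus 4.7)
The plan is to deduce this corollary from Theorem 3.1 by a transposition argument. Observe that
$M^{T} = \left(\begin{array}{cc} E^{T} & F^{T} \\ F^{T} & 0 \end{array}\right)$
has the same block shape as $M$, so $M$ has group inverse if and only if $M^{T}$ does, with $M^{\#} = ((M^{T})^{\#})^{T}$. Transposition also commutes with the Drazin inverse and with the spectral idempotent: $(T^{D})^{T} = (T^{T})^{D}$ and $(T^{\pi})^{T} = (T^{T})^{\pi}$ for every $T\in\mathcal{B}(X)$. This gives the bridge that converts Theorem 3.1 into a statement about $M$.

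First I would verify that the pair $(E^{T}, F^{T})$ satisfies the hypotheses of Theorem 3.1. The elements $E^{T}$ and $F^{T}$ clearly inherit Drazin/group invertibility from $E$ and $F$. For the remaining requirement on $E^{T}(F^{T})^{\pi} = (F^{\pi}E)^{T}$, one uses Cline's formula: since $EF^{\pi}$ has Drazin inverse by hypothesis, so does $F^{\pi}E$, hence so does its transpose. Finally, transposing the side condition $F^{\pi}EF=0$ yields $F^{T}E^{T}(F^{T})^{\pi}=0$, exactly the form required by Theorem 3.1.

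Applying Theorem 3.1 to the pair $(E^{T}, F^{T})$ gives that $M^{T}$ has group inverse if and only if $E^{T}(E^{T})^{\pi}(F^{T})^{\pi}=0$. Transposing back, this condition reads $F^{\pi}E^{\pi}E=0$, which is the expected second clause of the corollary and establishes the equivalence $(1)\Leftrightarrow(2)$ for $M$ itself.

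For the explicit formula, I would take the expression for $(M^{T})^{\#}$ supplied by Theorem 3.1 in terms of $(E^{T})^{D}, (F^{T})^{\#}, (F^{T})^{\pi}, (E^{T})^{\pi}$, and apply transposition entry by entry, using $(XY)^{T} = Y^{T}X^{T}$ to reverse each product. Each of the four blocks $\Gamma,\Delta,\Lambda,\Xi$ of $M^{\#}$ is then read off (with its position swapped across the anti-diagonal) from the correspondingly placed block of $(M^{T})^{\#}$. The only real obstacle here is clerical: one must faithfully reverse and transpose every summand in the fairly intricate formulas of Theorem 3.1 and check term-by-term agreement with the expressions the authors list. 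No new conceptual step is needed beyond the transposition principle and Cline's formula invoked above.
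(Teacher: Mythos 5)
Your proposal is correct and follows essentially the same route as the paper: the authors likewise invoke Cline's formula to transfer Drazin invertibility from $EF^{\pi}$ to $F^{\pi}E$ and then apply Theorem 3.1 to the transpose $M^{T}=\left(\begin{array}{cc} E^{T}&F^{T}\\ F^{T}&0 \end{array}\right)$, reading off condition (2) and the formula for $M^{\#}$ by transposing back. Your write-up is in fact more explicit than the paper's two-line proof about why the transposed hypotheses match those of Theorem 3.1, but there is no difference in method.
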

\begin{enumerate}
\item [(1)]{\it $M$ has group inverse.} \vspace{-.5mm}
\item [(2)]{\it $F^{\pi}E^{\pi}E=0$.}
\end{enumerate}
In this case, $$M^{\#}=\left(
  \begin{array}{cc}
    \Gamma&\Delta\\
    \Lambda&\Xi\\
     \end{array}
\right),$$
where $$\begin{array}{rll}
\Gamma&=&[F^{\pi}E^D+(F^{\#})^2EF^{\pi}E^{\pi}][I-F^{\pi}E^{\pi}]+(F^{\#})^2EF^{\pi}E^{\pi},\\
\Delta&=&[F^{\#}-F^{\#}E(F^{\#})^2EF^{\pi}E^{\pi}-F^{\#}EF^{\pi}E^D][I-F^{\pi}E^{\pi}]\\
&-&F^{\#}E(F^{\#})^2EF^{\pi}E^{\pi},\\
\Lambda&=&[F^{\pi}E^D+(F^{\#})^2EF^{\pi}E^{\pi}]^2F+F^{\#}-[(F^{\#})^2E]^2F^{\pi}E^{\pi}F\\
&-&(F^{\#})^2EF^{\pi}E^DF,\\
\Xi&=&[F^{\#}-F^{\#}E(F^{\#})^2EF^{\pi}E^{\pi}-F^{\#}EF^{\pi}E^D]\\
&&[F^{\pi}E^DF+(F^{\#})^2EF^{\pi}E^{\pi}F]-F^{\#}E[F^{\#}\\
&-&(F^{\#})^2E(F^{\#})^2EF^{\pi}E^{\pi}F-(F^{\#})^2EF^{\pi}E^DF].
\end{array}$$
\begin{proof} By virtue of Cline's formula, $F^{\pi}E$ has Drazin inverse. Then the proof is complete by applying Theorem 3.1 to the transpose $M^T=\left(
\begin{array}{cc}
E^T&F^T\\
F^T&0
\end{array}
\right).$\end{proof}

\begin{cor} Let $M=\left(
\begin{array}{cc}
E&F\\
F&0
\end{array}
\right)$ and $E,F$ have group inverse, $EF^{\pi}$ has Drazin inverse. If $F^{\pi}EF=0$, then $M$ has group inverse. In this case, $$M^{\#}=\left(
  \begin{array}{cc}
    \Gamma&\Delta\\
    \Lambda&\Xi\\
     \end{array}
\right),$$
where $$\begin{array}{rll}
\Gamma&=&[I-E^{\pi}F^{\pi}][E^{\#}F^{\pi}+E^{\pi}F^{\pi}E(F^{\#})^2]+E^{\pi}F^{\pi}E(F^{\#})^2,\\
\Delta&=&[I-E^{\pi}F^{\pi}][F^{\#}-E^{\pi}F^{\pi}E(F^{\#})^2EF^{\#}-E^{\#}F^{\pi}EF^{\#}]\\
&-&E^{\pi}F^{\pi}E(F^{\#})^2EF^{\#},\\
\Lambda&=&F[E^{\#}F^{\pi}+E^{\pi}F^{\pi}E(F^{\#})^2]^2+F^{\#}-FE^{\pi}F^{\pi}[E(F^{\#})^2]^2\\
&-&FE^{\#}F^{\pi}E(F^{\#})^2,\\
\Xi&=&[FE^{\#}F^{\pi}+FE^{\pi}F^{\pi}E(F^{\#})^2][F^{\#}-E^{\pi}F^{\pi}E(F^{\#})^2EF^{\#}\\
&-&E^{\#}F^{\pi}EF^{\#}]-[F^{\#}-FE^{\pi}F^{\pi}E(F^{\#})^2E(F^{\#})^2\\
&-&FE^{\#}F^{\pi}E(F^{\#})^2]EF^{\#}.
\end{array}$$
\end{cor}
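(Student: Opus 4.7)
My strategy is to deduce the corollary as a direct application of Corollary~3.2. The hypotheses of that corollary demand that $E$ and $EF^{\pi}$ have Drazin inverses, that $F$ has group inverse, and that $F^{\pi}EF=0$; each of these is already contained in the current assumptions once one recalls that a group inverse is a Drazin inverse of index one. The remaining condition of Corollary~3.2 is the equation $F^{\pi}E^{\pi}E=0$, and this is the only point that needs to be checked.

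The check will be immediate from the fact that $E$ has group inverse. Using $EE^{\#}E=E$ one obtains
\[
E^{\pi}E=(I-EE^{\#})E=E-EE^{\#}E=0,
\]
so that $F^{\pi}E^{\pi}E=F^{\pi}\cdot 0=0$. Corollary~3.2 therefore applies and produces the group inverse of $M$.

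For the explicit formula, the plan is to specialise the representation of $M^{\#}$ furnished by Corollary~3.2 using $E^D=E^{\#}$, and then to simplify using the two identities $EE^{\pi}=E^{\pi}E=0$ that hold because $E$ is group invertible. These identities cancel those terms in Corollary~3.2's expression in which $E^{\pi}$ sits adjacent to a free $E$, leaving the four blocks $\Gamma$, $\Delta$, $\Lambda$, $\Xi$ displayed above. I expect the main obstacle to be purely clerical: Corollary~3.2 writes its formula with the projections grouped as $F^{\pi}E^{\pi}$ while the target formula uses $E^{\pi}F^{\pi}$, so bringing the two into a common shape requires careful tracking of the order of products of the four projections $EE^{\#}$, $E^{\pi}$, $FF^{\#}$, $F^{\pi}$ in each summand. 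No further conceptual input is required; once the rearrangement is carried out entry by entry, the displayed formula emerges.
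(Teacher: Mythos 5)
Your first step is sound: since $E$ has group inverse, $E^{\pi}E=0$, so $F^{\pi}E^{\pi}E=0$ and Corollary~3.2 applies under the printed hypothesis $F^{\pi}EF=0$; this does prove the \emph{existence} of $M^{\#}$. The gap is your final step, where you claim that Corollary~3.2's representation can be brought into the displayed shape by ``careful tracking of the order'' of the projections. That rearrangement is impossible: $E^{\pi}$, $F^{\pi}$, $E^{\#}$, $F^{\#}$ do not commute, the two formulas are structurally transposes of one another, and they are genuinely different operators. Moreover, the cancellations you predict do not occur, because in Corollary~3.2's formula $E^{\pi}$ is separated from the free $E$ by $F^{\pi}$ (as in $(F^{\#})^2EF^{\pi}E^{\pi}$), so $EE^{\pi}=E^{\pi}E=0$ kills nothing. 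Worse, under the printed hypothesis the displayed formula is simply false. Take $E=\begin{pmatrix}1&1\\0&1\end{pmatrix}$ (invertible, so $E^{\pi}=0$) and $F=\begin{pmatrix}1&0\\0&0\end{pmatrix}$ (so $F^{\#}=F$, $F^{\pi}=\begin{pmatrix}0&0\\0&1\end{pmatrix}$). Then $F^{\pi}EF=0$ while $FEF^{\pi}\neq 0$, and $M$ is the direct sum of an invertible $3\times 3$ block and $0$, hence group invertible with
$$M^{\#}=\left(\begin{array}{cccc}0&0&1&0\\0&1&0&0\\1&-1&-1&0\\0&0&0&0\end{array}\right),$$
whose upper-left $2\times 2$ block is $F^{\pi}E^{-1}=\begin{pmatrix}0&0\\0&1\end{pmatrix}$; but the displayed $\Gamma$ reduces to $E^{\#}F^{\pi}=\begin{pmatrix}0&-1\\0&1\end{pmatrix}$. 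So no amount of entry-by-entry rewriting of Corollary~3.2 can yield the stated blocks.

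The paper's own proof is different and one line long: $E$ group invertible gives $EE^{\pi}=0$, hence $EE^{\pi}F^{\pi}=0$, and it then invokes Theorem~3.1 --- whose hypothesis is $FEF^{\pi}=0$, not $F^{\pi}EF=0$. In other words, the printed condition in this corollary is evidently a typo (carried over from Corollary~3.2), and the displayed formula is exactly Theorem~3.1's representation with $E^{D}$ replaced by $E^{\#}$, with no simplification or reordering needed. If you insist on the hypothesis $F^{\pi}EF=0$, the honest conclusion of your route is existence together with Corollary~3.2's formula with $E^{D}=E^{\#}$ --- and note that even there the blocks labelled $\Delta$ and $\Lambda$ must be interchanged to sit in the correct positions, as the transposition argument (and the example above) shows; to obtain the formula actually displayed here you must instead assume $FEF^{\pi}=0$ and quote Theorem~3.1 directly.
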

\begin{proof} Since $E$ has group inverse, we see that $EE^{\pi}=0$, and so $EE^{\pi}F^{\pi}=0$. In light of Theorem 3.1, $M$ has group inverse. Therefore we obtain the representation of $M^{\#}$ by the formula in Theorem 3.1. \end{proof}

As an immediate consequence of Corollary 3.3, we have

\begin{cor} Let $M=\left(
\begin{array}{cc}
E&F\\
F&0
\end{array}
\right)$ and $E,F$ have group inverse, $EF^{\pi}$ has Drazin inverse. If $EF=\lambda FE ~(\lambda \in {\Bbb C})$ or $EF^2=FEF$, then $M$ has group inverse. In this case, $$M^{\#}=\left(
  \begin{array}{cc}
    \Gamma&\Delta\\
    \Lambda&\Xi\\
     \end{array}
\right),$$
where $$\begin{array}{rll}
\Gamma&=&[I-E^{\pi}F^{\pi}][E^{\#}F^{\pi}+E^{\pi}F^{\pi}E(F^{\#})^2]+E^{\pi}F^{\pi}E(F^{\#})^2,\\
\Delta&=&[I-E^{\pi}F^{\pi}][F^{\#}-E^{\pi}F^{\pi}E(F^{\#})^2EF^{\#}-E^{\#}F^{\pi}EF^{\#}]\\
&-&E^{\pi}F^{\pi}E(F^{\#})^2EF^{\#},\\
\Lambda&=&F[E^{\#}F^{\pi}+E^{\pi}F^{\pi}E(F^{\#})^2]^2+F^{\#}-FE^{\pi}F^{\pi}[E(F^{\#})^2]^2\\
&-&FE^{\#}F^{\pi}E(F^{\#})^2,\\
\Xi&=&[FE^{\#}F^{\pi}+FE^{\pi}F^{\pi}E(F^{\#})^2][F^{\#}-E^{\pi}F^{\pi}E(F^{\#})^2EF^{\#}\\
&-&E^{\#}F^{\pi}EF^{\#}]-[F^{\#}-FE^{\pi}F^{\pi}E(F^{\#})^2E(F^{\#})^2\\
&-&FE^{\#}F^{\pi}E(F^{\#})^2]EF^{\#}.
\end{array}$$
\end{cor}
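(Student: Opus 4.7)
The plan is to reduce Corollary 3.4 directly to Corollary 3.3 by showing that either of the two algebraic hypotheses $EF=\lambda FE$ or $EF^2=FEF$ forces the commutation-type condition $F^{\pi}EF=0$, which is exactly the standing assumption of Corollary 3.3. Once that implication is established, the rest (both the existence of $M^{\#}$ and the stated formula) is a verbatim invocation of Corollary 3.3, since the remaining hypotheses ($E,F$ have group inverse and $EF^{\pi}$ has Drazin inverse) are identical.

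First I would handle the case $EF=\lambda FE$ with $\lambda\in\mathbb{C}$. Because $F$ has group inverse, the spectral idempotent satisfies $F^{\pi}F=FF^{\pi}=0$. Therefore
\[
F^{\pi}EF \;=\; F^{\pi}(\lambda FE) \;=\; \lambda (F^{\pi}F)E \;=\; 0,
\]
which is the hypothesis of Corollary 3.3.

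Next I would treat the case $EF^2=FEF$. The idea is to insert $F^{\#}$ to convert one factor of $F^2$ back into $F$, and then use $F^{\pi}F=0$ as before. Writing $F=F^2F^{\#}$ (valid because $F$ has group inverse, so $F^2F^{\#}=F$) gives
\[
F^{\pi}EF \;=\; F^{\pi}EF^2F^{\#} \;=\; F^{\pi}(FEF)F^{\#} \;=\; (F^{\pi}F)EFF^{\#} \;=\; 0,
\]
so again Corollary 3.3 applies.

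Neither implication involves the Drazin inverse of $EF^{\pi}$ or any spectral idempotent of $E$, so I anticipate no real obstacle; the only care needed is to observe that the group invertibility of $F$ is what makes the identity $F=F^2F^{\#}$ (and $F^{\pi}F=0$) available, and this is already in the hypotheses. With $F^{\pi}EF=0$ in hand in both cases, Corollary 3.3 yields both the group invertibility of $M$ and the explicit representation asserted for $M^{\#}$, which is literally the formula reproduced in the statement of Corollary 3.4.
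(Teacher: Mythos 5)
Your proof is correct and is essentially the paper's own argument: the paper derives $F^{\pi}EF=0$ from either hypothesis by exactly the computations you give (as carried out in the proof of Corollary 2.5, using $F^{\pi}F=0$ and $F=F^2F^{\#}$) and then invokes Corollary 3.3.
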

\begin{proof} As in proof of Corollary 2.5, we obtain the result by Corollary 3.3.\end{proof}

\begin{exam} Let $M=\left(
\begin{array}{cc}
E&F\\
F&0
\end{array}
\right)$, where $E=\left(
\begin{array}{cc}
1& 2\\
0&-1
\end{array}
\right)$ and $F=\left(
\begin{array}{cc}
i& i\\
0&0
\end{array}
\right), i^2=-1$. Then $$M^{\#}=\left(
\begin{array}{cccc}
0&1&-i&-i\\
0&-1&0&0\\
-i&-i&1&1\\
0&0&0&0
\end{array}
\right).$$\end{exam}
\begin{proof} We see that $$\begin{array}{c}
E^{\#}=\left(
\begin{array}{cc}
1&2\\
0&-1
\end{array}
\right), E^{\pi}=0;\\
F^{\#}=\left(
\begin{array}{cc}
-i& -i\\
0&0
\end{array}
\right), F^{\pi}=\left(
\begin{array}{cc}
0&-1\\
0&1
\end{array}
\right).
\end{array}$$ Hence we check that $FEF^{\pi}=0, EE^{\pi}F^{\pi}=0.$ Construct $\Gamma, \Delta, \Lambda$ and $\Xi$ as in Theorem 3.1. Then we compute that
$$\begin{array}{c}
\Gamma=\left(
\begin{array}{cc}
0&1\\
0&-1
\end{array}
\right), \Delta=\left(
\begin{array}{cc}
-i&-i\\
0&0
\end{array}
\right),\\
\Lambda=\left(
\begin{array}{cc}
-i&-i\\
0&0
\end{array}
\right), \Xi=\left(
\begin{array}{cc}
1&1\\
0&0
\end{array}
\right).
\end{array}$$ \\
This completes the proof by Theorem 3.1.\end{proof}

\vskip10mm


\begin{thebibliography}{99} \bibitem{B} C. Bu; K. Zhang and J. Zhao, Representation of the Drazin inverse on solution of a class singular differential equations, {\it Linear Multilinear Algebra}, {\bf 59}(2011), 863--877.

\bibitem{B2} C. Bu; X. Zhou; L. Ma and J. Zhou, On the group inverse for the sum of matrices,
{\it J. Aust. Math. Soc.}, {\bf 96}(2014), 36--43.

\bibitem{C2} C. Cao; Y. Wang and Y. Sheng, Group inverses for some $2\times 2$ block matrices over rings,
{\it Front. Math. China}, {\bf 11}(2016), 521--538.

\bibitem{C1} C. Cao; H. Zhang and Y. Ge, Further results on the group inverse of some anti-triangular block matrices,
{\it J. Appl. Math. Comput.}, {\bf 46}(2014), 169--179.

\bibitem{Ca} N. Castro-Gonzlez et al, The group inverse of $2\times 2$ matrices over a ring,
  Linear Algebra Appl. 438(2013), 3600-3609.

\bibitem{C} H. Chen and M. Sheibani, The g-Drazin inverses of special operator matrices,
Operator and Matrices, 15(2021), 151--162.

\bibitem{C2} C. Bu; X. Zhou; L. Ma and J. Zhou, On the group inverse for the sum of matrices,
{\it J. Aust. Math. Soc.}, {\bf 96}(2014), 36--43.

\bibitem{C21} D.S. Cvetkovic, A note on the representation for the Drazin inverse of $2\times 2$ block matrices,
Linear Algebra Appl. 429(2008), 242--248.

\end{thebibliography}
\end{document}